\newtheorem{theorem}{Theorem}[section]
\newtheorem{corollary}[theorem]{Corollary}
\newtheorem{definition}[theorem]{Definition}
\numberwithin{equation}{section}
\newenvironment{proof}{\par\noindent{\bf Proof.}}{$\square$\par\bigskip}
\begin{document}

\title{\textbf{The matrix sequence in terms of bi-periodic Fibonacci numbers}%
}
\author{\texttt{Arzu Coskun} and \texttt{Necati Taskara}}
\date{Department of Mathematics, Faculty of Science,\\
Selcuk University, Campus, 42075, Konya - Turkey \\
[0.3cm] \textit{arzucoskun58@gmail.com} and \textit{ntaskara@selcuk.edu.tr}}
\maketitle

\begin{abstract}
In this paper, we define the bi-periodic Fibonacci \textit{matrix sequence}
that represent bi-periodic Fibonacci numbers. Then, we investigate
generating function, Binet formula and summations of bi-periodic Fibonacci
matrix sequence. After that, we say that some behaviours of bi-periodic
Fibonacci numbers also can be obtained by considering properties of this new
matrix sequence. Finally, we express that well-known matrix sequences, such
as Fibonacci, Pell, $k$-Fibonacci matrix sequences are special cases of this
generalized matrix sequence.

\textit{Keywords:} bi-periodic Fibonacci matrix sequence, bi-periodic
Fibonacci numbers, Binet formula, generating function.

\textit{Mathematics Subject Classification:} 11B39; 15A24.
\end{abstract}

\section{Introduction and Preliminaries}

\qquad The special number sequences such as Fibonacci, Lucas, Pell,
Jacobsthal, Padovan and Perrin and their properties have been investigated
in many articles and books (see, for example \cite{1,3,4,6,7,9,10,12,13,15}
and the references cited therein). The Fibonacci numbers have attracted the
attention of mathematicians because of their intrinsic theory and
applications. The ratio of two consecutive of these numbers converges to the
Golden section $\alpha =\frac{1+\sqrt{5}}{2}$. It is also clear that the
ratio has so many applications in, specially, Physics, Engineering,
Architecture, etc.\cite{8}.

After the study of Fibonacci numbers started in the beginning of 13.
century, many authors have generalized this sequence in different ways. One
of those generalizations was published in 2009 by Edson et al. in \cite{3}.
In this reference, the authors defined the bi-periodic Fibonacci $\left\{
q_{n}\right\} _{n\in \mathbb{N}}$ sequence%
\begin{equation}
q_{n}=\left\{ 
\begin{array}{c}
aq_{n-1}+q_{n-2},\ \ \text{if }n\text{ is even} \\ 
bq_{n-1}+q_{n-2},\ \ \text{if }n\text{ is odd}\ 
\end{array}%
\right. \ ,  \label{1.1}
\end{equation}%
where $q_{0}=0,\ q_{1}=1$ and $a,b$ are nonzero real numbers.

On the other hand, the matrix sequences have taken so much interest for
different type of numbers (\cite{2,5,11,14,16}). In \cite{5}, the authors
defined $(s,t)$-Pell and $(s,t)$-Pell--Lucas sequences and $(s,t)$-Pell and $%
(s,t)$-Pell--Lucas matrix sequences, also gave their some properties. Yazlik
et al., in \cite{14}, establish generalized $(s,t)$-matrix sequences and
present some important relationships among $(s,t)$-Fibonacci and $(s,t)$%
-Lucas sequences and their matrix sequences. In \cite{16}, Yilmaz and
Taskara defined the matrix sequences of Padovan and Perrin numbers. Then,
they presented the relationships between these matrix sequences.

The goal of this paper is to define the related \textit{matrix sequence} for
bi-periodic Fibonacci numbers as the first time in the literature. Then, it
will be given the generating function, Binet formula and summation formulas
for this new generalized matrix sequence. Thus, some fundamental properties
of bi-periodic Fibonacci numbers can be obtained by taking into account this
generalized matrix sequence and its properties. By using the results in
Sections 2, we have a great opportunity to obtain some new properties over
this matrix sequence.

\section{The matrix representation of bi-periodic Fibonacci numbers}

In this section, we mainly focus on the matrix sequence of bi-periodic
Fibonacci numbers to get some important results. In fact, we also present
the generating function, Binet formula and summations for the matrix
sequence.

Hence, in the following, we firstly define the bi-periodic Fibonacci matrix
sequence.

\begin{definition}
\label{def1} For $n\in \mathbb{N}$ and $a,b\ $nonzero real numbers , the
bi-periodic Fibonacci matrix sequences $\left( \mathcal{F}_{n}\left(
a,b\right) \right) $ are defined by%
\begin{equation}
\mathcal{F}_{n}\left( a,b\right) =\left\{ 
\begin{array}{c}
a\mathcal{F}_{n-1}\left( a,b\right) +\mathcal{F}_{n-2}\left( a,b\right) 
\text{, }\ n\text{ even} \\ 
b\mathcal{F}_{n-1}\left( a,b\right) +\mathcal{F}_{n-2}\left( a,b\right) 
\text{, }\ n\text{ odd}\ 
\end{array}%
\right.  \label{2.1}
\end{equation}
\end{definition}

with initial conditions%
\begin{equation*}
{\mathcal{F}}_{0}\left( a,b\right) =\left( 
\begin{array}{cc}
1 & 0 \\ 
0 & 1%
\end{array}%
\right) ,{\mathcal{F}}_{1}\left( a,b\right) =\left( 
\begin{array}{cc}
b & \frac{b}{a} \\ 
1 & 0%
\end{array}%
\right) .
\end{equation*}

\vskip0.4cm

In Definition $\ref{def1},$ the matrix $\mathcal{F}_{1}$ is analogue to the
Fibonacci $Q$-matrix which exists for Fibonacci numbers.

In the following theorem, we give the $n$th general term of the matrix
sequence in (\ref{2.1}) via bi-periodic Fibonacci numbers.

\begin{theorem}
\label{teo1} For any integer $n\geq 0,$ we have the matrix sequence%
\begin{equation}
\mathcal{F}_{n}\left( a,b\right) =\left( 
\begin{array}{cc}
\left( \frac{b}{a}\right) ^{\varepsilon (n)}q_{n+1} & \frac{b}{a}q_{n} \\ 
q_{n} & \left( \frac{b}{a}\right) ^{\varepsilon (n)}q_{n-1}%
\end{array}%
\right) ,  \label{2.2}
\end{equation}
\end{theorem}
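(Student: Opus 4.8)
The statement claims a closed form for $\mathcal{F}_n(a,b)$ in terms of bi-periodic Fibonacci numbers, so the natural approach is induction, using the two-term recurrence that defines the matrix sequence. The subtlety here is the parity-dependence: the recurrence switches between using $a$ and $b$ depending on whether $n$ is even or odd, and the claimed formula itself has a parity-dependent exponent $\varepsilon(n)$ sitting on the factor $(b/a)$. So I would use strong/two-step induction, establishing the base cases $n=0$ and $n=1$ directly, and then proving the inductive step separately for even $n$ and odd $n$.

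Let me sketch what I'd check. First I need to pin down the notation $\varepsilon(n)$: from context (and the initial data, where $\mathcal{F}_0$ has no $b/a$ factor on the diagonal while $\mathcal{F}_1$ does) it must be $\varepsilon(n) = n - 2\lfloor n/2\rfloor$, i.e. $\varepsilon(n)=0$ when $n$ is even and $\varepsilon(n)=1$ when $n$ is odd. For the base cases, I would substitute $n=0$ and $n=1$ into the claimed matrix. For $n=0$: since $\varepsilon(0)=0$, $q_1=1$, $q_0=0$, $q_{-1}=1$, the formula gives the identity matrix, matching $\mathcal{F}_0$. For $n=1$: since $\varepsilon(1)=1$, $q_2=a$, $q_1=1$, $q_0=0$, the formula gives $\left(\begin{smallmatrix} (b/a)a & (b/a)\cdot 1 \\ 1 & 0\end{smallmatrix}\right) = \left(\begin{smallmatrix} b & b/a \\ 1 & 0\end{smallmatrix}\right)$, matching $\mathcal{F}_1$. (I'd note that $q_{-1}=1$ follows from running the recurrence $q_1 = bq_0 + q_{-1}$ backward.)

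For the inductive step, assume the formula holds for $n-1$ and $n-2$. Take the case $n$ even, so $\varepsilon(n)=0$, while $\varepsilon(n-1)=1$ and $\varepsilon(n-2)=0$. Then I compute $\mathcal{F}_n = a\mathcal{F}_{n-1} + \mathcal{F}_{n-2}$ entrywise. For the $(1,1)$ entry I'd get $a\cdot(b/a)q_n + q_{n-1} = bq_n + q_{n-1}$, and this should equal the target $(b/a)^{\varepsilon(n)}q_{n+1} = q_{n+1}$; the match holds because for \emph{odd} index $n+1$ the recurrence reads $q_{n+1}=bq_n+q_{n-1}$. The other three entries unwind similarly, each reducing to an instance of the bi-periodic recurrence~(\ref{1.1}) with the correct parity. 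The odd-$n$ case is symmetric with the roles of $a$ and $b$ swapped and $\varepsilon$ shifted accordingly.

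The main obstacle I anticipate is purely bookkeeping: keeping the parity of $\varepsilon(n)$, $\varepsilon(n-1)$, $\varepsilon(n-2)$ straight while simultaneously matching the parity-sensitive recurrence for $q$. The factors of $a$ and $b/a$ must cancel precisely so that each entry collapses to the "correct-parity" recurrence for the bi-periodic Fibonacci numbers — for instance the crucial cancellation $a\cdot(b/a)=b$ in the even case is exactly what converts the matrix recurrence's leading coefficient into the one demanded by $q_{n+1}$'s parity. There is no deep idea here beyond the induction; the work is in verifying that all four entries, in both parity cases, line up, and I would present it by handling the even case in full and remarking that the odd case follows by the symmetric computation.
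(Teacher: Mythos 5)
Your proposal is correct and follows essentially the same route as the paper's own proof: a two-step induction with base cases $n=0$ and $n=1$ (using $q_{-1}=q_1=1$, $q_0=0$, $q_2=a$) and an inductive step split by the parity of the index, where the paper encodes the parity-dependent coefficient compactly as $a^{\varepsilon(k)}b^{1-\varepsilon(k)}$ rather than treating the two cases in separate displays. The entrywise cancellations you identify (e.g.\ $a\cdot\frac{b}{a}=b$ aligning with the odd-index recurrence for $q_{n+1}$) are exactly the ones carried out in the paper.
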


where $\varepsilon (n)$ is partial function which%
\begin{equation*}
\varepsilon (n)=\left\{ 
\begin{array}{c}
1,\ n\text{ odd} \\ 
0,\ n\text{ even}%
\end{array}%
\right. .
\end{equation*}

\begin{proof}
First of all, by considering (\ref{1.1}), we obtain the equalities $q_{2}=a,$
$q_{-1}=q_{1}=1$ and $q_{0}=0$. And so, first and second steps of the
induction is obtained as follows:%
\begin{equation*}
\mathcal{F}_{0}\left( a,b\right) =\left( 
\begin{array}{cc}
1 & 0 \\ 
0 & 1%
\end{array}%
\right) =\left( 
\begin{array}{cc}
q_{1} & \frac{b}{a}q_{0} \\ 
q_{0} & q_{-1}%
\end{array}%
\right) ,
\end{equation*}%
\begin{equation*}
\mathcal{F}_{1}\left( a,b\right) =\left( 
\begin{array}{cc}
b & \frac{b}{a} \\ 
1 & 0%
\end{array}%
\right) =\left( 
\begin{array}{cc}
\frac{b}{a}q_{2} & \frac{b}{a}q_{1} \\ 
q_{1} & \frac{b}{a}q_{0}%
\end{array}%
\right) .
\end{equation*}

Actually, by assuming the equation in (\ref{2.2}) holds for all $n=k\in 
%TCIMACRO{\U{2124} }%
%BeginExpansion
\mathbb{Z}
%EndExpansion
^{+}$, we can end up the proof if we manage to show that the case also holds
for $n=k+1$: 
%, by considering (\ref{1.02}) and (\ref{1.1}), we have to show that the case also holds for $n=k+1$. Therefore we get
\begin{eqnarray*}
\mathcal{F}_{k+1}\left( a,b\right) &=&\left\{ \QATOP{a\mathcal{F}_{k}\left(
a,b\right) +\mathcal{F}_{k-1}\left( a,b\right) \text{, }k+1\text{ even}}{b%
\mathcal{F}_{k}\left( a,b\right) +\mathcal{F}_{k-1}\left( a,b\right) \text{, 
}k+1\text{ odd}}\right. \\
&=&a^{^{\varepsilon (k)}}b_{k}^{1-^{\varepsilon (k)}}\mathcal{F}_{k}\left(
a,b\right) +\mathcal{F}_{k-1}\left( a,b\right) \\
&=&a^{^{\varepsilon (k)}}b^{1-^{\varepsilon (k)}}\left( 
\begin{array}{cc}
\left( \frac{b}{a}\right) ^{\varepsilon (k)}q_{k+1} & \frac{b}{a}q_{k} \\ 
q_{k} & \left( \frac{b}{a}\right) ^{\varepsilon (k)}q_{k-1}%
\end{array}%
\right) \\
&&+\left( 
\begin{array}{cc}
\left( \frac{b}{a}\right) ^{\varepsilon (k-1)}q_{k} & \frac{b}{a}q_{k-1} \\ 
q_{k-1} & \left( \frac{b}{a}\right) ^{\varepsilon (k-1)}q_{k-2}%
\end{array}%
\right) \\
&=&\left\{ 
\begin{array}{c}
\left( 
\begin{array}{cc}
\frac{b}{a}q_{k+2} & \frac{b}{a}q_{k+1} \\ 
q_{k+1} & \frac{b}{a}q_{k}%
\end{array}%
\right) \text{, }k\text{ even} \\ 
\left( 
\begin{array}{cc}
q_{k+2} & \frac{b}{a}q_{k+1} \\ 
q_{k+1} & q_{k}%
\end{array}%
\right) \text{, }k\text{ odd}%
\end{array}%
\right. .
\end{eqnarray*}

By combining this partial function, we obtain%
\begin{equation*}
\mathcal{F}_{k+1}\left( a,b\right) =\left( 
\begin{array}{cc}
\left( \frac{b}{a}\right) ^{\varepsilon (k+1)}q_{k+2} & \frac{b}{a}q_{k+1}
\\ 
q_{k+1} & \left( \frac{b}{a}\right) ^{\varepsilon (k+1)}q_{k}%
\end{array}%
\right) .
\end{equation*}
\end{proof}

\begin{theorem}
\label{teo2}Let $\mathcal{F}_{n}\left( a,b\right) $ be as in (\ref{2.2}).
Then the following equality is valid for all positive integers:%
\begin{equation*}
\det (\mathcal{F}_{n}\left( a,b\right) )=\left( -\frac{b}{a}\right)
^{\varepsilon (n)}
\end{equation*}
\end{theorem}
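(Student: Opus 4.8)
The plan is to read the determinant straight off the closed form \eqref{2.2} established in Theorem \ref{teo1}, and then reduce the statement to a Cassini-type identity for the bi-periodic Fibonacci numbers which I will verify by induction from the defining recurrence \eqref{1.1}. Expanding the $2\times 2$ determinant of \eqref{2.2} gives
\begin{equation*}
\det\bigl(\mathcal{F}_n(a,b)\bigr)=\Bigl(\tfrac{b}{a}\Bigr)^{2\varepsilon(n)}q_{n+1}q_{n-1}-\tfrac{b}{a}\,q_n^2 =: D_n ,
\end{equation*}
so the whole theorem amounts to proving $D_n=\bigl(-\tfrac{b}{a}\bigr)^{\varepsilon(n)}$ for every $n\ge 1$.

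First I would settle the two base cases. Using $q_0=0$, $q_1=1$, $q_2=a$ and $q_3=ab+1$ from \eqref{1.1}, direct substitution gives $D_1=-\tfrac{b}{a}$ and $D_2=1$, matching $\bigl(-\tfrac{b}{a}\bigr)^{1}$ and $\bigl(-\tfrac{b}{a}\bigr)^{0}$ respectively.

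For the inductive step I would relate $D_n$ to $D_{n-1}$ rather than to the same-parity $D_{n-2}$, splitting on the parity of $n$. If $n$ is even I substitute $q_{n+1}=bq_n+q_{n-1}$ and $q_n=aq_{n-1}+q_{n-2}$ into $D_n$; the $bq_nq_{n-1}$ contributions cancel and the expression collapses to $D_n=q_{n-1}^2-\tfrac{b}{a}q_nq_{n-2}$, which, compared with $D_{n-1}=\tfrac{b^2}{a^2}q_nq_{n-2}-\tfrac{b}{a}q_{n-1}^2$ (here $n-1$ is odd), yields the clean relation $D_n=-\tfrac{a}{b}D_{n-1}$. If $n$ is odd the symmetric computation, using $q_{n+1}=aq_n+q_{n-1}$ and $q_n=bq_{n-1}+q_{n-2}$, gives $D_n=-\tfrac{b}{a}D_{n-1}$. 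In either case the inductive hypothesis turns the prefactor into exactly $\bigl(-\tfrac{b}{a}\bigr)^{\varepsilon(n)}$, since the two multipliers $-\tfrac{a}{b}$ and $-\tfrac{b}{a}$ toggle the value between $1$ and $-\tfrac{b}{a}$ as the parity alternates.

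The only real obstacle is bookkeeping: one must apply the correct branch of \eqref{1.1} to each of $q_{n+1}$ and $q_n$ according to their parities, and track the powers of $\tfrac{b}{a}$ carried by $\varepsilon(n)$ versus $\varepsilon(n-1)$, which differ by one. A cleaner but equivalent route worth recording is that the two cases merge into the single step relation $D_n=-\tfrac{c_n}{c_{n-1}}D_{n-1}$, where $c_k\in\{a,b\}$ is the coefficient used at index $k$ in \eqref{1.1}; telescoping this from $D_1=-\tfrac{b}{a}$ gives $D_n=(-1)^{n}\tfrac{c_n}{a}$, which equals $1$ for even $n$ and $-\tfrac{b}{a}$ for odd $n$, completing the proof.
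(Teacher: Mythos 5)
Your proof is correct, and it takes a genuinely different --- and more complete --- route than the paper. The paper's own proof simply computes $\det (\mathcal{F}_{1}\left( a,b\right) )=-\frac{b}{a}$, $\det (\mathcal{F}_{2}\left( a,b\right) )=1$ and $\det (\mathcal{F}_{3}\left( a,b\right) )=-\frac{b}{a}$ by writing out the matrix entries, and then asserts the general formula ``by iterating this procedure''; no inductive step is ever carried out, so it is really a pattern observation on three cases. You instead expand the determinant from the closed form (\ref{2.2}), reducing the theorem to the Cassini-type identity $D_{n}=\left( \frac{b}{a}\right) ^{2\varepsilon (n)}q_{n+1}q_{n-1}-\frac{b}{a}q_{n}^{2}=\left( -\frac{b}{a}\right) ^{\varepsilon (n)}$, and prove that by a parity-split induction; I checked both of your one-step relations and they hold exactly as stated ($D_{n}=-\frac{a}{b}D_{n-1}$ for $n$ even, $D_{n}=-\frac{b}{a}D_{n-1}$ for $n$ odd, i.e.\ $D_{n}=-\frac{c_{n}}{c_{n-1}}D_{n-1}$ with $c_{k}$ the recurrence coefficient at index $k$, telescoping to $D_{n}=(-1)^{n}\frac{c_{n}}{a}$). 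What your route buys is an actual induction where the paper has only verified cases plus an appeal to iteration; it also reverses the paper's logical order: the paper deduces the Cassini identity of Corollary \ref{cor1} from Theorems \ref{teo1} and \ref{teo2} by comparing determinants, whereas you effectively prove the number-level Cassini identity directly from the recurrence (\ref{1.1}) and read the determinant value off it. Your argument does lean on the closed form of Theorem \ref{teo1}, but since the statement of Theorem \ref{teo2} explicitly takes $\mathcal{F}_{n}\left( a,b\right) $ ``as in (\ref{2.2})'', nothing is lost there.
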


\begin{proof}
By using the iteration, we can write%
\begin{equation*}
\det (\mathcal{F}_{1}\left( a,b\right) )=\left\vert 
\begin{array}{cc}
b & \frac{b}{a} \\ 
1 & 0%
\end{array}%
\right\vert =-\frac{b}{a},
\end{equation*}%
\begin{equation*}
\det (\mathcal{F}_{2}\left( a,b\right) )=\left\vert 
\begin{array}{cc}
ab+1 & b \\ 
a & 1%
\end{array}%
\right\vert =1,
\end{equation*}%
\begin{equation*}
\det (\mathcal{F}_{3}\left( a,b\right) )=\left\vert 
\begin{array}{cc}
ab^{2}+2b & b^{2}+\frac{b}{a} \\ 
ab+1 & b%
\end{array}%
\right\vert =-\frac{b}{a},
\end{equation*}

respectively. By iterating this procedure, we get%
\begin{equation*}
\det (\mathcal{F}_{n}\left( a,b\right) )=\left\{ 
\begin{array}{c}
-\frac{b}{a},\text{ }n\text{ odd} \\ 
1,\text{ }n\text{ even}%
\end{array}%
\right.
\end{equation*}

which is desired.\ 
\end{proof}

In \cite{3}, the authors obtained the Cassini identity for bi-periodic
Fibonacci numbers. Now, as a different approximation \ and so as a
consequence of Theorem \ref{teo1} and Theorem \ref{teo2}, in the following
corollary, we rewrite this identity. In fact, in the proof of this
corollary, we just compare determinants.

\begin{corollary}
\label{cor1}Cassini identity for bi-periodic Fibonacci sequence can also be
obtained using bi-periodic Fibonacci matrix sequence. That is, by using
Theorem \ref{teo1} and Theorem \ref{teo2}, we can write%
\begin{equation*}
\left( \frac{b}{a}\right) ^{2\varepsilon (n)}q_{n+1}q_{n-1}-\frac{b}{a}%
q_{n}^{2}=\left( -\frac{b}{a}\right) ^{\varepsilon (n)}.
\end{equation*}

Thus, we obtain%
\begin{equation*}
a^{1-\varepsilon (n)}b^{\varepsilon (n)}q_{n+1}q_{n-1}-a^{\varepsilon
(n)}b^{1-\varepsilon (n)}q_{n}^{2}=a\left( -1\right) ^{n}.
\end{equation*}
\end{corollary}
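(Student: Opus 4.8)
The plan is to derive the Cassini identity purely by comparing two expressions for $\det(\mathcal{F}_n(a,b))$. By Theorem~\ref{teo1}, the matrix $\mathcal{F}_n(a,b)$ has the explicit entries displayed in (\ref{2.2}), so I would first compute its determinant directly from those entries as
\begin{equation*}
\det(\mathcal{F}_n(a,b))=\left(\tfrac{b}{a}\right)^{2\varepsilon(n)}q_{n+1}q_{n-1}-\tfrac{b}{a}q_n^2,
\end{equation*}
using that the off-diagonal product always contributes the factor $\tfrac{b}{a}$ regardless of parity. On the other hand, Theorem~\ref{teo2} gives $\det(\mathcal{F}_n(a,b))=\left(-\tfrac{b}{a}\right)^{\varepsilon(n)}$. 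Equating these two values yields precisely the first displayed identity in the corollary.

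For the second, cleaner form I would clear the denominators by multiplying through by $a$. The key observation is a case analysis on the parity of $n$, tracking the exponent $\varepsilon(n)\in\{0,1\}$. When $n$ is even, $\varepsilon(n)=0$ and the identity reduces to $q_{n+1}q_{n-1}-\tfrac{b}{a}q_n^2=1$; multiplying by $a$ gives $aq_{n+1}q_{n-1}-bq_n^2=a$, which matches $a^{1-\varepsilon(n)}b^{\varepsilon(n)}q_{n+1}q_{n-1}-a^{\varepsilon(n)}b^{1-\varepsilon(n)}q_n^2=a(-1)^n$ since $a^1b^0=a$, $a^0b^1=b$, and $(-1)^n=1$. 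When $n$ is odd, $\varepsilon(n)=1$ and the identity reads $\tfrac{b^2}{a^2}q_{n+1}q_{n-1}-\tfrac{b}{a}q_n^2=-\tfrac{b}{a}$; multiplying by $a^2/b$ (valid since $a,b$ are nonzero) gives $bq_{n+1}q_{n-1}-aq_n^2=-a$, which again matches the target form with $a^0b^1=b$, $a^1b^0=a$, and $(-1)^n=-1$.

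The only genuine care needed is bookkeeping the exponents of $a$ and $b$ so that the two parity cases collapse into the single uniform expression $a^{1-\varepsilon(n)}b^{\varepsilon(n)}q_{n+1}q_{n-1}-a^{\varepsilon(n)}b^{1-\varepsilon(n)}q_n^2=a(-1)^n$; verifying that the right-hand side $\left(-\tfrac{b}{a}\right)^{\varepsilon(n)}$ becomes $a(-1)^n$ after scaling is the spot most prone to a sign or exponent slip. Since both theorems are already established, there is no real obstacle here — the corollary is essentially a determinant comparison, and the author's remark that we "just compare determinants" is exactly the intended route. I would present the even and odd cases side by side and then merge them, which makes the appearance of the unifying factor $(-1)^n$ transparent.
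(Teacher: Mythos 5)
Your proposal is correct and follows exactly the paper's intended route: equating the determinant computed from the explicit form in Theorem \ref{teo1} with the value given by Theorem \ref{teo2}, then rescaling (by $a$ in the even case and $a^{2}/b$ in the odd case) to merge the two parities into the uniform identity with right-hand side $a(-1)^{n}$. The paper itself only remarks that one should ``just compare determinants,'' so your write-up supplies precisely the bookkeeping the authors left implicit, with no gaps.
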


\begin{theorem}
\label{teo3} For bi-periodic Fibonacci matrix sequence, we have the
generating function%
\begin{equation*}
\sum\limits_{i=0}^{\infty }\mathcal{F}_{i}\left( a,b\right) x^{i}=\dfrac{1}{%
1-\left( ab+2\right) x^{2}+x^{4}}\left( 
\begin{array}{cc}
1+bx-x^{2} & \frac{b}{a}x+bx^{2}-\frac{b}{a}x^{3} \\ 
x+ax^{2}-x^{3} & 1-(ab+1)x^{2}+bx^{3}%
\end{array}%
\right) .
\end{equation*}
\end{theorem}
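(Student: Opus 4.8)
The plan is to reduce the parity-dependent two-term recurrence \eqref{2.1} to a single fourth-order recurrence with \emph{constant} coefficients, and then to apply the standard generating-function argument for a linear recurrence. The payoff of the reduction is that the alternating coefficients $a,b$ recombine into the single constant $ab+2$, after which the problem becomes purely mechanical.

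First I would establish that, for every $n\geq 4$,
\begin{equation*}
\mathcal{F}_{n}\left( a,b\right) =(ab+2)\mathcal{F}_{n-2}\left( a,b\right) -\mathcal{F}_{n-4}\left( a,b\right) .
\end{equation*}
This can be obtained directly from \eqref{2.1} by iterating the defining recurrence over three consecutive steps and eliminating the intermediate term. For $n$ even, writing $\mathcal{F}_{n}=a\mathcal{F}_{n-1}+\mathcal{F}_{n-2}$, then $\mathcal{F}_{n-1}=b\mathcal{F}_{n-2}+\mathcal{F}_{n-3}$, and finally substituting $a\mathcal{F}_{n-3}=\mathcal{F}_{n-2}-\mathcal{F}_{n-4}$, yields $\mathcal{F}_{n}=(ab+2)\mathcal{F}_{n-2}-\mathcal{F}_{n-4}$; the case $n$ odd is symmetric with the roles of $a$ and $b$ interchanged. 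Alternatively, one may invoke Theorem \ref{teo1}: because $\varepsilon$ has period $2$, the prefactors satisfy $\varepsilon (n)=\varepsilon (n-2)=\varepsilon (n-4)$, so each entry reduces to the scalar identity $q_{m}=(ab+2)q_{m-2}-q_{m-4}$, which is itself easy to check from \eqref{1.1}.

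Next I would set $G(x)=\sum_{i=0}^{\infty }\mathcal{F}_{i}\left( a,b\right) x^{i}$ and multiply by the denominator $1-(ab+2)x^{2}+x^{4}$. By the recurrence just established, the coefficient of $x^{n}$ in $\bigl(1-(ab+2)x^{2}+x^{4}\bigr)G(x)$ vanishes for every $n\geq 4$, so the product collapses to a matrix polynomial of degree at most three, namely
\begin{equation*}
\mathcal{F}_{0}+\mathcal{F}_{1}x+\bigl(\mathcal{F}_{2}-(ab+2)\mathcal{F}_{0}\bigr)x^{2}+\bigl(\mathcal{F}_{3}-(ab+2)\mathcal{F}_{1}\bigr)x^{3}.
\end{equation*}
It then remains to compute $\mathcal{F}_{0},\mathcal{F}_{1},\mathcal{F}_{2},\mathcal{F}_{3}$ explicitly from Theorem \ref{teo1} (or from \eqref{2.1}), substitute, and simplify each entry. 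A direct check shows that these four coefficient matrices assemble into exactly the numerator displayed in the statement, after which dividing through by $1-(ab+2)x^{2}+x^{4}$ completes the proof.

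The main obstacle is the first step: establishing the constant-coefficient fourth-order recurrence and verifying that the $(b/a)^{\varepsilon (n)}$ prefactors cause no mismatch when $\mathcal{F}_{n}$, $\mathcal{F}_{n-2}$, $\mathcal{F}_{n-4}$ are related. Everything after that is routine bookkeeping, whose only subtlety is keeping the corrected low-order coefficients $\mathcal{F}_{2}-(ab+2)\mathcal{F}_{0}$ and $\mathcal{F}_{3}-(ab+2)\mathcal{F}_{1}$ straight. I would also note that the manipulation is to be read as an identity of formal power series (equivalently, valid for $\left\vert x\right\vert$ sufficiently small), which is the usual setting for such generating-function results.
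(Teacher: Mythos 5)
Your proposal is correct, and it takes a noticeably more direct route than the paper's own proof, although both ultimately rest on the same key fact: the constant-coefficient fourth-order recurrence $\mathcal{F}_{n}=(ab+2)\mathcal{F}_{n-2}-\mathcal{F}_{n-4}$. The paper never states this recurrence for all indices. Instead it first multiplies $G(x)$ by $1-bx-x^{2}$, uses the odd-index recurrence $\mathcal{F}_{2i+1}=b\mathcal{F}_{2i}+\mathcal{F}_{2i-1}$ to make the odd-degree terms cancel, and is left with $(1-bx-x^{2})G(x)=\mathcal{F}_{0}+x(\mathcal{F}_{1}-b\mathcal{F}_{0})+(a-b)x\,g(x)$, where $g(x)=\sum_{i\geq 1}\mathcal{F}_{2i-1}x^{2i-1}$ is the odd part of the series; it then derives the fourth-order recurrence only along odd indices, solves for $g(x)$ against the quartic denominator, and reassembles $G(x)$. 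Your plan establishes the fourth-order recurrence for every $n\geq 4$ (both parities, which you verify correctly, including the boundary case $n=4$ where $a\mathcal{F}_{1}=\mathcal{F}_{2}-\mathcal{F}_{0}$ is used in place of a genuine $\mathcal{F}_{0}$-term elimination), multiplies $G(x)$ by $1-(ab+2)x^{2}+x^{4}$ in one step, and reads off the polynomial $\mathcal{F}_{0}+\mathcal{F}_{1}x+\bigl(\mathcal{F}_{2}-(ab+2)\mathcal{F}_{0}\bigr)x^{2}+\bigl(\mathcal{F}_{3}-(ab+2)\mathcal{F}_{1}\bigr)x^{3}$. This numerator agrees entrywise with the one in the statement (note $\mathcal{F}_{2}-(ab+2)\mathcal{F}_{0}=a\mathcal{F}_{1}-\mathcal{F}_{0}-ab\mathcal{F}_{0}$ and $\mathcal{F}_{3}-(ab+2)\mathcal{F}_{1}=b\mathcal{F}_{0}-\mathcal{F}_{1}$, which are exactly the coefficient matrices appearing in the paper's final formula), so your argument closes. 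What your route buys is brevity and symmetry: no auxiliary series $g(x)$, no parity split of the sum, and no reassembly algebra. What the paper's route buys is the intermediate identity relating $G(x)$ to its odd part, which is of some independent interest but is not needed for the stated result.
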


\begin{proof}
Assume that $G(x)$ is the generating function for the sequence $\left\{ 
\mathcal{F}_{n}\right\} _{n\in \mathbb{N}}$. Then, we have%
\begin{equation*}
G\left( x\right) =\sum\limits_{i=0}^{\infty }\mathcal{F}_{i}\left(
a,b\right) x^{i}=\mathcal{F}_{0}\left( a,b\right) +\mathcal{F}_{1}\left(
a,b\right) x+\sum\limits_{i=2}^{\infty }\mathcal{F}_{i}\left( a,b\right)
x^{i}.
\end{equation*}

Note that%
\begin{equation*}
-bxG\left( x\right) =-bx\sum\limits_{i=0}^{\infty }\mathcal{F}_{i}\left(
a,b\right) x^{i}=-bx\mathcal{F}_{0}\left( a,b\right)
-b\sum\limits_{i=2}^{\infty }\mathcal{F}_{i-1}\left( a,b\right) x^{i}
\end{equation*}

and%
\begin{equation*}
-x^{2}G\left( x\right) =-\sum\limits_{i=2}^{\infty }\mathcal{F}_{i-2}\left(
a,b\right) x^{i}.
\end{equation*}

Thus, we can write%
\begin{eqnarray*}
\left( 1-bx-x^{2}\right) G\left( x\right) &=&\mathcal{F}_{0}\left(
a,b\right) +x\left( \mathcal{F}_{1}\left( a,b\right) -b\mathcal{F}_{0}\left(
a,b\right) \right) \\
&&+\sum\limits_{i=2}^{\infty }\left( \mathcal{F}_{i}\left( a,b\right) -b%
\mathcal{F}_{i-1}\left( a,b\right) -\mathcal{F}_{i-2}\left( a,b\right)
\right) x^{i}.
\end{eqnarray*}

Since $\mathcal{F}_{2i+1}\left( a,b\right) =b\mathcal{F}_{2i}\left(
a,b\right) +\mathcal{F}_{2i-1}\left( a,b\right) $, we get%
\begin{eqnarray*}
\left( 1-bx-x^{2}\right) G\left( x\right) &=&\mathcal{F}_{0}\left(
a,b\right) +x\left( \mathcal{F}_{1}\left( a,b\right) -b\mathcal{F}_{0}\left(
a,b\right) \right) \\
&&+\sum\limits_{i=1}^{\infty }\left( \mathcal{F}_{2i}\left( a,b\right) -b%
\mathcal{F}_{2i-1}\left( a,b\right) -\mathcal{F}_{2i-2}\left( a,b\right)
\right) x^{2i} \\
&=&\mathcal{F}_{0}\left( a,b\right) +x\left( \mathcal{F}_{1}\left(
a,b\right) -b\mathcal{F}_{0}\left( a,b\right) \right) \\
&&+\left( a-b\right) x\sum\limits_{i=1}^{\infty }\mathcal{F}_{2i-1}\left(
a,b\right) x^{2i-1}.
\end{eqnarray*}

Now, let%
\begin{equation*}
g(x)=\sum\limits_{i=1}^{\infty }\mathcal{F}_{2i-1}\left( a,b\right) x^{2i-1}.
\end{equation*}

Since%
\begin{eqnarray*}
\mathcal{F}_{2i+1}\left( a,b\right) &=&b\mathcal{F}_{2i}\left( a,b\right) +%
\mathcal{F}_{2i-1}\left( a,b\right) \\
&=&b(a\mathcal{F}_{2i-1}\left( a,b\right) +\mathcal{F}_{2i-2}\left(
a,b\right) )+\mathcal{F}_{2i-1}\left( a,b\right) \\
&=&(ab+1)\mathcal{F}_{2i-1}\left( a,b\right) +b\mathcal{F}_{2i-2}\left(
a,b\right) \\
&=&(ab+1)\mathcal{F}_{2i-1}\left( a,b\right) +\mathcal{F}_{2i-1}\left(
a,b\right) -\mathcal{F}_{2i-3}\left( a,b\right) \\
&=&(ab+2)\mathcal{F}_{2i-1}\left( a,b\right) -\mathcal{F}_{2i-3}\left(
a,b\right) ,
\end{eqnarray*}

we have%
\begin{eqnarray*}
\left( 1-\left( ab+2\right) x^{2}+x^{4}\right) g\left( x\right)  &=&\mathcal{%
F}_{1}\left( a,b\right) x+\mathcal{F}_{3}\left( a,b\right) x^{3}-(ab+2)%
\mathcal{F}_{1}\left( a,b\right) x^{3} \\
&&+\sum\limits_{i=3}^{\infty }\left\{ 
\begin{array}{c}
\mathcal{F}_{2i-1}\left( a,b\right) -(ab+2)\mathcal{F}_{2i-3}\left(
a,b\right)  \\ 
\text{ \ \ \ \ \ }+\mathcal{F}_{2i-5}\left( a,b\right) 
\end{array}%
\right\} x^{2i-1}.
\end{eqnarray*}

Therefore,%
\begin{eqnarray*}
g\left( x\right) &=&\frac{\mathcal{F}_{1}\left( a,b\right) x+\mathcal{F}%
_{3}\left( a,b\right) x^{3}-(ab+2)\mathcal{F}_{1}\left( a,b\right) x^{3}}{%
1-\left( ab+2\right) x^{2}+x^{4}} \\
&=&\frac{\mathcal{F}_{1}\left( a,b\right) x+(b\mathcal{F}_{0}\left(
a,b\right) -\mathcal{F}_{1}\left( a,b\right) )x^{3}}{1-\left( ab+2\right)
x^{2}+x^{4}}
\end{eqnarray*}%
and as a result, we get%
\begin{equation*}
G\left( x\right) =\frac{\left\{ 
\begin{array}{c}
\mathcal{F}_{0}\left( a,b\right) +x\mathcal{F}_{1}\left( a,b\right)
+x^{2}\left( a\mathcal{F}_{1}\left( a,b\right) -\mathcal{F}_{0}\left(
a,b\right) -ab\mathcal{F}_{0}\left( a,b\right) \right) \\ 
\text{ \ \ \ \ \ \ \ \ \ \ \ \ \ }+x^{3}\left( b\mathcal{F}_{0}\left(
a,b\right) -\mathcal{F}_{1}\left( a,b\right) \right)%
\end{array}%
\right\} }{1-\left( ab+2\right) x^{2}+x^{4}}.
\end{equation*}%
which is desired equality.
\end{proof}

\begin{theorem}
\label{teo4} For every $n\in 
%TCIMACRO{\U{2115} }%
%BeginExpansion
\mathbb{N}
%EndExpansion
,$ we write the Binet formula for the bi-periodic Fibonacci matrix sequence
as the form%
\begin{equation*}
\mathcal{F}_{n}\left( a,b\right) =A_{1}\left( \alpha ^{n}-\beta ^{n}\right)
+B_{1}\left( \alpha ^{2\left\lfloor \frac{n}{2}\right\rfloor +2}-\beta
^{2\left\lfloor \frac{n}{2}\right\rfloor +2}\right) ,
\end{equation*}%
where 
\begin{eqnarray*}
A_{1} &=&\dfrac{\left[ {\mathcal{F}}_{1}\left( a,b\right) -b\mathcal{F}%
_{0}\left( a,b\right) \right] ^{\varepsilon (n)}\left[ a{\mathcal{F}}%
_{1}\left( a,b\right) -\mathcal{F}_{0}\left( a,b\right) -ab\mathcal{F}%
_{0}\left( a,b\right) \right] ^{1-\varepsilon (n)}}{\left( ab\right)
^{\left\lfloor \frac{n}{2}\right\rfloor }\left( \alpha -\beta \right) },~ \\
B_{1} &=&\dfrac{b^{\varepsilon (n)}\mathcal{F}_{0}\left( a,b\right) }{\left(
ab\right) ^{\left\lfloor \frac{n}{2}\right\rfloor +1}\left( \alpha -\beta
\right) },
\end{eqnarray*}%
such that $\alpha =\frac{ab+\sqrt{a^{2}b^{2}+4ab}}{2},\beta =\frac{ab-\sqrt{%
a^{2}b^{2}+4ab}}{2},$ and $\varepsilon (n)=n-2\left\lfloor \frac{n}{2}%
\right\rfloor $.
\end{theorem}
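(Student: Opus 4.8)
The plan is to reduce the mixed even/odd recurrence (\ref{2.1}) to an ordinary second-order linear recurrence with \emph{constant} (scalar) coefficients by separating the sequence according to the parity of the index, and then to solve that recurrence in closed form. The starting point is the relation already extracted inside the proof of Theorem \ref{teo3}, namely that the odd-indexed matrices satisfy $\mathcal{F}_{2i+1}=(ab+2)\mathcal{F}_{2i-1}-\mathcal{F}_{2i-3}$. First I would record the analogous relation for the even-indexed matrices: using $\mathcal{F}_{2i+2}=a\mathcal{F}_{2i+1}+\mathcal{F}_{2i}$, $\mathcal{F}_{2i+1}=b\mathcal{F}_{2i}+\mathcal{F}_{2i-1}$ and $a\mathcal{F}_{2i-1}=\mathcal{F}_{2i}-\mathcal{F}_{2i-2}$, a one-line elimination gives $\mathcal{F}_{2i+2}=(ab+2)\mathcal{F}_{2i}-\mathcal{F}_{2i-2}$. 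Thus both parity classes obey the single recurrence $X_i=(ab+2)X_{i-1}-X_{i-2}$.

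The characteristic equation of this recurrence is $t^{2}-(ab+2)t+1=0$. The key algebraic fact I would verify is that its two roots are exactly $\alpha^{2}/(ab)$ and $\beta^{2}/(ab)$: indeed $\alpha+\beta=ab$ and $\alpha\beta=-ab$ give $\alpha^{2}+\beta^{2}=a^{2}b^{2}+2ab$ and $\alpha^{2}\beta^{2}=a^{2}b^{2}$, so $\tfrac{\alpha^{2}}{ab}+\tfrac{\beta^{2}}{ab}=ab+2$ and $\tfrac{\alpha^{2}}{ab}\cdot\tfrac{\beta^{2}}{ab}=1$. Writing the general solution of each parity subsequence as $\mathcal{F}_{2i+\varepsilon}=C\left(\tfrac{\alpha^{2}}{ab}\right)^{i}+D\left(\tfrac{\beta^{2}}{ab}\right)^{i}$ with matrix coefficients $C,D$, I would pin down $C$ and $D$ from two consecutive members of the subsequence. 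For the even class the natural seeds are $\mathcal{F}_{0}$ and $\mathcal{F}_{2}=a\mathcal{F}_{1}+\mathcal{F}_{0}$; for the odd class they are $\mathcal{F}_{1}$ and $\mathcal{F}_{3}=(ab+1)\mathcal{F}_{1}+b\mathcal{F}_{0}$ (both read off directly from (\ref{2.1})). Solving the resulting $2\times 2$ linear systems expresses $C,D$ through the combinations $\mathcal{F}_{1}-b\mathcal{F}_{0}$, $a\mathcal{F}_{1}-\mathcal{F}_{0}-ab\mathcal{F}_{0}$ and $\mathcal{F}_{0}$ that appear in $A_{1},B_{1}$.

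The final step is purely bookkeeping: set $i=\lfloor n/2\rfloor$, so that $\left(\tfrac{\alpha^{2}}{ab}\right)^{i}=\alpha^{2\lfloor n/2\rfloor}/(ab)^{\lfloor n/2\rfloor}$, and repackage the scalar powers using $\tfrac{\alpha^{m}-\beta^{m}}{\alpha-\beta}$. Because $2\lfloor n/2\rfloor=n-\varepsilon(n)$, the $\alpha^{2i}$ factor produces $\alpha^{n}$ after absorbing one factor $\alpha^{\varepsilon(n)}$, while the companion root $\alpha^{2}/(ab)$ pushes the second mode up to the exponent $2\lfloor n/2\rfloor+2$; this is exactly where the parity-dependent exponent in the statement (equal to $n+1$ for odd $n$ and $n+2$ for even $n$) comes from, and it accounts for the $\varepsilon(n)$-exponents and the $(ab)^{\lfloor n/2\rfloor}$ normalisations sitting in $A_{1}$ and $B_{1}$. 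I expect this recombination --- forcing the two parity cases, each with its own initial data and its own exponent shift, into the single uniform display with the $\varepsilon(n)$ and $\lfloor n/2\rfloor$ notation --- to be the main obstacle; a clean way to control it is to verify the base cases $n=0,1$ and then check that the proposed closed form reproduces $\mathcal{F}_{2}=a\mathcal{F}_{1}+\mathcal{F}_{0}$ and $\mathcal{F}_{3}=(ab+1)\mathcal{F}_{1}+b\mathcal{F}_{0}$, which fixes $C,D$ unambiguously and certifies the formula for all $n$.
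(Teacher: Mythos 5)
Your proposal is correct, but it takes a genuinely different route from the paper. The paper derives the Binet formula analytically from the generating function of Theorem \ref{teo3}: it performs a partial fraction decomposition of $G(x)$ over the factors $x^{2}-(\alpha+1)$ and $x^{2}-(\beta+1)$, expands each piece as a Maclaurin series, simplifies using the relation $X^{2}-abX-ab=0$ satisfied by $\alpha,\beta$, and finally merges the odd-power and even-power series into the single display with $\varepsilon(n)$ and $\lfloor n/2\rfloor$. You bypass the generating function entirely: you observe that both parity classes satisfy the constant-coefficient recurrence $X_{i}=(ab+2)X_{i-1}-X_{i-2}$ (the odd-index case of which the paper itself extracts inside the proof of Theorem \ref{teo3}), identify the characteristic roots as $\alpha^{2}/(ab)$ and $\beta^{2}/(ab)$ via $\alpha+\beta=ab$, $\alpha\beta=-ab$, and pin down the two matrix coefficients in each parity class from consecutive initial values ($\mathcal{F}_{0},\mathcal{F}_{2}$ and $\mathcal{F}_{1},\mathcal{F}_{3}$). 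Since a second-order linear recurrence with distinct characteristic roots has a unique solution once two consecutive terms are fixed, your closing verification strategy --- check that the stated closed form is of the required exponential shape in each parity class and reproduces the four seed values --- is airtight and indeed I have checked it reproduces $\mathcal{F}_{2}=a\mathcal{F}_{1}+\mathcal{F}_{0}$ and $\mathcal{F}_{3}=(ab+1)\mathcal{F}_{1}+b\mathcal{F}_{0}$ exactly. What each approach buys: yours is shorter, more elementary, and makes the structure transparent (two geometric modes per parity class, with the $(ab)^{\lfloor n/2\rfloor}$ normalisation arising from the root $\alpha^{2}/(ab)$ rather than from series bookkeeping); the paper's route reuses machinery it has already built (Theorem \ref{teo3}) and produces the merged uniform formula directly from the power series coefficients. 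Both arguments share the same implicit nondegeneracy assumption $\alpha\neq\beta$, i.e.\ $ab\neq-4$, needed to divide by $\alpha-\beta$.
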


\begin{proof}
We know that the generating function of bi-periodic Fibonacci matrix
sequence is%
\begin{equation*}
G\left( x\right) =\frac{\left\{ 
\begin{array}{c}
\mathcal{F}_{0}\left( a,b\right) +x\mathcal{F}_{1}\left( a,b\right)
+x^{2}\left( a\mathcal{F}_{1}\left( a,b\right) -\mathcal{F}_{0}\left(
a,b\right) -ab\mathcal{F}_{0}\left( a,b\right) \right) \\ 
\text{ \ \ \ \ \ \ \ \ \ \ \ \ \ \ \ }+x^{3}\left( b\mathcal{F}_{0}\left(
a,b\right) -\mathcal{F}_{1}\left( a,b\right) \right)%
\end{array}%
\right\} }{1-\left( ab+2\right) x^{2}+x^{4}}.
\end{equation*}

Using the partial fraction decomposition, we rewrite $G\left( x\right) $ as%
\begin{equation*}
G\left( x\right) =\frac{1}{\alpha -\beta }\left[ 
\begin{array}{c}
\frac{\left\{ 
\begin{array}{c}
x\left\{ \alpha \left( b\mathcal{F}_{0}\left( a,b\right) -\mathcal{F}%
_{1}\left( a,b\right) \right) +b\mathcal{F}_{0}\left( a,b\right) \right\} \\ 
\text{ \ \ }+\alpha \left( a\mathcal{F}_{1}\left( a,b\right) -\mathcal{F}%
_{0}\left( a,b\right) -ab\mathcal{F}_{0}\left( a,b\right) \right) \\ 
\text{ \ \ \ \ \ }+a\mathcal{F}_{1}\left( a,b\right) -ab\mathcal{F}%
_{0}\left( a,b\right)%
\end{array}%
\right\} }{x^{2}-\left( \alpha +1\right) }\text{ \ \ \ \ \ \ \ \ \ \ \ \ \ \
\ \ \ } \\ 
\\ 
+\frac{\left\{ 
\begin{array}{c}
x\left\{ \beta \left( \mathcal{F}_{1}\left( a,b\right) -b\mathcal{F}%
_{0}\left( a,b\right) \right) -b\mathcal{F}_{0}\left( a,b\right) \right\} \\ 
\text{ \ \ \ \ \ }+\beta \left( ab\mathcal{F}_{0}\left( a,b\right) +\mathcal{%
F}_{0}\left( a,b\right) -a\mathcal{F}_{1}\left( a,b\right) \right) \\ 
+ab\mathcal{F}_{0}\left( a,b\right) -a\mathcal{F}_{1}\left( a,b\right)%
\end{array}%
\right\} }{x^{2}-\left( \beta +1\right) }%
\end{array}%
\right] .
\end{equation*}

Since the Maclaurin series expansion of the function $\frac{A-Bx}{x^{2}-C}$
is given by%
\begin{equation*}
\frac{A-Bx}{x^{2}-C}=\overset{\infty }{\underset{n=0}{\sum }}%
BC^{-n-1}x^{2n+1}-\overset{\infty }{\underset{n=0}{\sum }}AC^{-n-1}x^{2n},
\end{equation*}

the generating function $G\left( x\right) $ can be expressed as%
\begin{equation*}
G(x)=\frac{1}{\alpha -\beta }\left\{ 
\begin{array}{c}
\overset{\infty }{\underset{n=0}{\sum }}\frac{\left\{ 
\begin{array}{c}
\left\{ 
\begin{array}{c}
\alpha \left( \mathcal{F}_{1}\left( a,b\right) -b\mathcal{F}_{0}\left(
a,b\right) \right)  \\ 
-b\mathcal{F}_{0}\left( a,b\right) 
\end{array}%
\right\} \left( \beta +1\right) ^{n+1}\text{ \ \ \ \ \ \ \ \ } \\ 
+\left\{ 
\begin{array}{c}
\beta \left( b\mathcal{F}_{0}\left( a,b\right) -\mathcal{F}_{1}\left(
a,b\right) \right)  \\ 
+b\mathcal{F}_{0}\left( a,b\right) 
\end{array}%
\right\} \left( \alpha +1\right) ^{n+1}%
\end{array}%
\right\} }{\left( \alpha +1\right) ^{n+1}\left( \beta +1\right) ^{n+1}}%
x^{2n+1} \\ 
\\ 
-\overset{\infty }{\underset{n=0}{\sum }}\frac{\left\{ 
\begin{array}{c}
\alpha \left( a\mathcal{F}_{1}\left( a,b\right) -\mathcal{F}_{0}\left(
a,b\right) -ab\mathcal{F}_{0}\left( a,b\right) \right)  \\ 
\text{ \ \ \ \ \ \ \ }+a\mathcal{F}_{1}\left( a,b\right) -ab\mathcal{F}%
_{0}\left( a,b\right) 
\end{array}%
\right\} }{\left( \alpha +1\right) ^{n+1}\left( \beta +1\right) ^{n+1}}%
\left( \beta +1\right) ^{n+1}x^{2n} \\ 
\\ 
-\overset{\infty }{\underset{n=0}{\sum }}\frac{\left\{ 
\begin{array}{c}
\beta \left( ab\mathcal{F}_{0}\left( a,b\right) +\mathcal{F}_{0}\left(
a,b\right) -a\mathcal{F}_{1}\left( a,b\right) \right)  \\ 
\text{ \ \ \ \ }+ab\mathcal{F}_{0}\left( a,b\right) -a\mathcal{F}_{1}\left(
a,b\right) 
\end{array}%
\right\} }{\left( \alpha +1\right) ^{n+1}\left( \beta +1\right) ^{n+1}}%
\left( \alpha +1\right) ^{n+1}x^{2n}%
\end{array}%
\right\} .
\end{equation*}

By using properties of $\alpha $ and $\beta $, since we know that $\alpha $
and $\beta $ are roots of equation $X^{2}-abX-ab=0,$ we obtain%
\begin{eqnarray*}
G(x) &=&\frac{1}{\alpha -\beta }\overset{\infty }{\underset{n=0}{\sum }}%
\left( \frac{1}{ab}\right) ^{n+1}\left\{ 
\begin{array}{c}
-ab\left( \mathcal{F}_{1}\left( a,b\right) -b\mathcal{F}_{0}\left(
a,b\right) \right) \beta ^{2n+1}\text{ \ \ \ \ \ } \\ 
-ab\left( b\mathcal{F}_{0}\left( a,b\right) -\mathcal{F}_{1}\left(
a,b\right) \right) \alpha ^{2n+1}\text{ \  \ } \\ 
-b\mathcal{F}_{0}\left( a,b\right) \beta ^{2n+2}+b\mathcal{F}_{0}\left(
a,b\right) \alpha ^{2n+2}%
\end{array}%
\right\} x^{2n+1} \\
&&+\frac{1}{\alpha -\beta }\overset{\infty }{\underset{n=0}{\sum }}\left( 
\frac{1}{ab}\right) ^{n+1}\left\{ 
\begin{array}{c}
-ab\left\{ 
\begin{array}{c}
a\mathcal{F}_{1}\left( a,b\right) -\mathcal{F}_{0}\left( a,b\right)  \\ 
\text{ \ \ \ \ }-ab\mathcal{F}_{0}\left( a,b\right) 
\end{array}%
\right\} \beta ^{2n} \\ 
-ab\left\{ 
\begin{array}{c}
ab\mathcal{F}_{0}\left( a,b\right) +\mathcal{F}_{0}\left( a,b\right)  \\ 
\text{ \ \ \ }-a\mathcal{F}_{1}\left( a,b\right) 
\end{array}%
\right\} \alpha ^{2n} \\ 
-\mathcal{F}_{0}\left( a,b\right) \beta ^{2n+2}+\mathcal{F}_{0}\left(
a,b\right) \alpha ^{2n+2}%
\end{array}%
\right\} x^{2n}.
\end{eqnarray*}%
\begin{eqnarray*}
G(x) &=&\overset{\infty }{\underset{n=0}{\sum }}\left( \frac{1}{ab}\right)
^{n}\frac{\left( \mathcal{F}_{1}\left( a,b\right) -b\mathcal{F}_{0}\left(
a,b\right) \right) \left( \alpha ^{2n+1}-\beta ^{2n+1}\right) }{\alpha
-\beta }x^{2n+1} \\
&&+\overset{\infty }{\underset{n=0}{\sum }}\left( \frac{1}{ab}\right) ^{n+1}%
\frac{b\mathcal{F}_{0}\left( a,b\right) \left( \alpha ^{2n+2}-\beta
^{2n+2}\right) }{\alpha -\beta }x^{2n+1} \\
&&+\sum \left( \frac{1}{ab}\right) ^{n}\frac{\left( ab\mathcal{F}_{0}\left(
a,b\right) +\mathcal{F}_{0}\left( a,b\right) -a\mathcal{F}_{1}\left(
a,b\right) \right) \left( \beta ^{2n}-\alpha ^{2n}\right) }{\alpha -\beta }%
x^{2n} \\
&&\overset{\infty }{\underset{n=0}{+\sum }}\left( \frac{1}{ab}\right) ^{n+1}%
\frac{\mathcal{F}_{0}\left( a,b\right) \left( \alpha ^{2n+2}-\beta
^{2n+2}\right) }{\alpha -\beta }x^{2n}.
\end{eqnarray*}

Combining the sums, we get%
\begin{equation*}
G(x)=\overset{\infty }{\underset{n=0}{\sum }}\left\{ 
\begin{array}{c}
\left[ \mathcal{F}_{1}\left( a,b\right) -b\mathcal{F}_{0}\left( a,b\right) %
\right] ^{\varepsilon (n)}\left\{ 
\begin{array}{c}
a\mathcal{F}_{1}\left( a,b\right) -\mathcal{F}_{0}\left( a,b\right)  \\ 
\text{ \ \ \ \ }-ab\mathcal{F}_{0}\left( a,b\right) 
\end{array}%
\right\} ^{1-\varepsilon (n)}\left( \frac{\alpha ^{n}-\beta ^{n}}{\left(
ab\right) ^{\left\lfloor \frac{n}{2}\right\rfloor }\left( \alpha -\beta
\right) }\right)  \\ 
+b^{\varepsilon (n)}\mathcal{F}_{0}\left( a,b\right) \left( \frac{\alpha
^{2\left( \left\lfloor \frac{n}{2}\right\rfloor +1\right) }-\beta ^{2\left(
\left\lfloor \frac{n}{2}\right\rfloor +1\right) }}{\left( ab\right)
^{\left\lfloor \frac{n}{2}\right\rfloor +1}\left( \alpha -\beta \right) }%
\right) 
\end{array}%
\right\} x^{n}.
\end{equation*}

Therefore, for all $n\geq 0$, from the definition of generating function, we
have%
\begin{equation*}
\mathcal{F}_{n}\left( a,b\right) =A_{1}\left( \alpha ^{n}-\beta ^{n}\right)
+B_{1}\left( \alpha ^{2\left\lfloor \frac{n}{2}\right\rfloor +2}-\beta
^{2\left\lfloor \frac{n}{2}\right\rfloor +2}\right) ,
\end{equation*}

which is desired.
\end{proof}

Now, for bi-periodic Fibonacci matrix sequence, we give the some \textit{%
summations by} considering Binet formula.

\begin{theorem}
\label{teo5} For $k\geq 0$, the following statements are true:
\end{theorem}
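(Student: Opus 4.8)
The plan is to derive closed-form summation identities for the matrix sequence $\mathcal{F}_n(a,b)$ directly from the Binet formula established in Theorem~\ref{teo4}. Since Theorem~\ref{teo5} concerns sums such as $\sum_{i=0}^{k}\mathcal{F}_i(a,b)$ (and very likely the even-indexed sum $\sum_{i=0}^{k}\mathcal{F}_{2i}(a,b)$ and odd-indexed sum $\sum_{i=0}^{k}\mathcal{F}_{2i+1}(a,b)$ as separate statements, given the parity-driven structure of everything so far), the natural route is to substitute $\mathcal{F}_n(a,b)=A_1(\alpha^n-\beta^n)+B_1(\alpha^{2\lfloor n/2\rfloor+2}-\beta^{2\lfloor n/2\rfloor+2})$ into each sum and evaluate the resulting geometric series in $\alpha$ and $\beta$. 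First I would handle the even and odd index sums, because there the floor function simplifies cleanly: for $n=2i$ one has $\lfloor n/2\rfloor=i$ and $\varepsilon(n)=0$, while for $n=2i+1$ one has $\lfloor n/2\rfloor=i$ and $\varepsilon(n)=1$, so the coefficients $A_1,B_1$ become genuinely constant across the sum and the floor disappears from the exponent bookkeeping.

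Concretely, for a fixed parity the summand is a fixed linear combination of $\alpha^{2i}$, $\beta^{2i}$, $\alpha^{2i+2}$, $\beta^{2i+2}$ (times a possible odd shift), so each sum reduces to finitely many instances of $\sum_{i=0}^{k}r^{i}=\frac{r^{k+1}-1}{r-1}$ with $r=\alpha^2$ or $r=\beta^2$. I would collect these, use the root relations $\alpha+\beta=ab$, $\alpha\beta=-ab$, and $\alpha-\beta=\sqrt{a^2b^2+4ab}$ from Theorem~\ref{teo4} to clear denominators, and then re-express the telescoped powers of $\alpha,\beta$ back in terms of the matrix entries $\mathcal{F}_m(a,b)$ for appropriate indices $m$ via the same Binet formula read in reverse. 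The goal in each case is a right-hand side that is itself a short integer (or $a,b$-rational) combination of a few matrices $\mathcal{F}_m(a,b)$ together with the identity matrix $\mathcal{F}_0(a,b)$, divided by the structural quantity $ab$ (or $ab+2$) that appears as the characteristic denominator in Theorem~\ref{teo3}.

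The main obstacle I anticipate is precisely the reverse step: after summing the geometric series one is left with bare powers $\alpha^{2k+c}-\beta^{2k+c}$ and $\alpha^{2k+c}+\beta^{2k+c}$, and converting these cleanly back into matrix terms $\mathcal{F}_{2k+c}(a,b)$ is delicate because the Binet formula mixes two different power-blocks (the $\alpha^n-\beta^n$ block with coefficient $A_1$ and the $\alpha^{2\lfloor n/2\rfloor+2}-\beta^{2\lfloor n/2\rfloor+2}$ block with coefficient $B_1$), and the coefficients $A_1,B_1$ themselves carry parity-dependent factors $(\mathcal{F}_1-b\mathcal{F}_0)^{\varepsilon(n)}$ and $b^{\varepsilon(n)}$. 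Keeping the two blocks separated through the telescoping, and verifying that the parity factors recombine correctly when the final index $2k+c$ is substituted, is where the real care is needed; a useful safeguard is to check each candidate identity at small $k$ (say $k=0,1$) against the explicit matrices $\mathcal{F}_0,\mathcal{F}_1,\mathcal{F}_2$ computed in Theorem~\ref{teo1} and Theorem~\ref{teo2}.

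For the mixed (all-index) sum $\sum_{i=0}^{k}\mathcal{F}_i(a,b)$, if it appears, I would split it by parity into the two sums already handled and then recombine, rather than attempting to sum over the alternating-coefficient pattern directly, since the floor function and $\varepsilon$ make a single unified geometric evaluation awkward. Throughout, I would rely on Theorem~\ref{teo4} as the engine and treat the whole computation as linear in the matrix coefficients, so that the matrix identities follow entrywise from the corresponding scalar bi-periodic Fibonacci summations; this keeps the argument to the assembly of geometric series plus the root identities, with no genuinely new idea beyond careful parity bookkeeping.
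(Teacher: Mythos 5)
Your proposal follows essentially the same route as the paper's proof: the authors likewise split $\sum_{k=0}^{n-1}\mathcal{F}_{k}\left( a,b\right)$ according to the parity of the index, substitute the Binet formula of Theorem~\ref{teo4} into the even- and odd-indexed parts, evaluate the resulting geometric series, and use the root relations of $X^{2}-abX-ab=0$ to reassemble the result as $\frac{1}{ab}\bigl( a^{\varepsilon (n)}b^{1-\varepsilon (n)}\mathcal{F}_{n}\left( a,b\right) +a^{1-\varepsilon (n)}b^{\varepsilon (n)}\mathcal{F}_{n-1}\left( a,b\right) -a\mathcal{F}_{1}\left( a,b\right) +ab\mathcal{F}_{0}\left( a,b\right) -b\mathcal{F}_{0}\left( a,b\right) \bigr)$, while part $(ii)$ is omitted there as ``similar,'' just as you defer it. One minor bookkeeping correction: since $A_{1}$ and $B_{1}$ carry the factor $\left( ab\right) ^{-\left\lfloor n/2\right\rfloor }$, they are \emph{not} constant across the sum even at fixed parity, so the geometric ratio is $\alpha ^{2}/(ab)$ (respectively $\beta ^{2}/(ab)$) rather than $\alpha ^{2}$ --- which is exactly how the paper's computation proceeds and does not affect the viability of your argument.
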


\begin{itemize}
\item[$(i)$] 
\begin{equation}
\sum\limits_{k=0}^{n-1}\mathcal{F}_{k}\left( a,b\right) =\dfrac{\left\{ 
\begin{array}{c}
a^{\varepsilon \left( n\right) }b^{1-\varepsilon \left( n\right) }\mathcal{F}%
_{n}\left( a,b\right) +a^{1-\varepsilon \left( n\right) }b^{\varepsilon
\left( n\right) }\mathcal{F}_{n-1}\left( a,b\right) \\ 
\text{ \ \ }-a\mathcal{F}_{1}\left( a,b\right) +ab\mathcal{F}_{0}\left(
a,b\right) -b\mathcal{F}_{0}\left( a,b\right)%
\end{array}%
\right\} }{ab},  \notag
\end{equation}

\item[$(ii)$] 
\begin{equation}
\sum\limits_{k=0}^{n}\mathcal{F}_{k}\left( a,b\right) x^{-k}=\frac{1}{%
1-(ab+2)x^{2}+x^{4}}\left\{ 
\begin{array}{c}
\dfrac{\mathcal{F}_{n-1}\left( a,b\right) }{x^{n-1}}-\dfrac{\mathcal{F}%
_{n+1}\left( a,b\right) }{x^{n-3}} \\ 
+\dfrac{\mathcal{F}_{n}\left( a,b\right) }{x^{n}}-\dfrac{\mathcal{F}%
_{n+2}\left( a,b\right) }{x^{n+2}} \\ 
+x^{4}\mathcal{F}_{0}\left( a,b\right) +x^{3}\mathcal{F}_{1}\left( a,b\right)
\\ 
-x^{2}\left[ \left( ab+1\right) \mathcal{F}_{0}\left( a,b\right) -a\mathcal{F%
}_{1}\left( a,b\right) \right] \\ 
-x\left( \mathcal{F}_{1}\left( a,b\right) -b\mathcal{F}_{0}\left( a,b\right)
\right)%
\end{array}%
\right\} .  \notag
\end{equation}
\end{itemize}

where $\alpha =\frac{ab+\sqrt{a^{2}b^{2}+4ab}}{2},\beta =\frac{ab-\sqrt{%
a^{2}b^{2}+4ab}}{2}$ and $\varepsilon (n)=n-2\left\lfloor \frac{n}{2}%
\right\rfloor $.

\begin{proof}
We omit the proof of $(ii)$, because it can be done similarly as in the
proof of $(i)$. We investigate the situation according to the $n$ is even or
odd. Thus, for even $n$%
\begin{eqnarray*}
\sum\limits_{k=0}^{n-1}\mathcal{F}_{k}\left( a,b\right)
&=&\sum\limits_{k=0}^{\frac{n-2}{2}}\mathcal{F}_{2k}\left( a,b\right)
+\sum\limits_{k=0}^{\frac{n-2}{2}}\mathcal{F}_{2k+1}\left( a,b\right) \\
&=&\sum\limits_{k=0}^{\frac{n-2}{2}}\frac{a\mathcal{F}_{1}\left( a,b\right) -%
\mathcal{F}_{0}\left( a,b\right) -ab\mathcal{F}_{0}\left( a,b\right) }{%
\left( ab\right) ^{k}}\frac{\alpha ^{2k}-\beta ^{2k}}{\alpha -\beta } \\
&&+\sum\limits_{k=0}^{\frac{n-2}{2}}\frac{\mathcal{F}_{0}\left( a,b\right) }{%
\left( ab\right) ^{k+1}}\frac{\alpha ^{2k+2}-\beta ^{2k+2}}{\alpha -\beta }
\\
&&+\sum\limits_{k=0}^{\frac{n-2}{2}}\frac{\mathcal{F}_{1}\left( a,b\right) -b%
\mathcal{F}_{0}\left( a,b\right) }{\left( ab\right) ^{k}}\frac{\alpha
^{2k+1}-\beta ^{2k+1}}{\alpha -\beta } \\
&&+\sum\limits_{k=0}^{\frac{n-2}{2}}\frac{b\mathcal{F}_{0}\left( a,b\right) 
}{\left( ab\right) ^{k+1}}\frac{\alpha ^{2k+2}-\beta ^{2k+2}}{\alpha -\beta }%
.
\end{eqnarray*}

In here, simplifying the last equality, we imply%
\begin{eqnarray*}
\sum\limits_{k=0}^{n-1}\mathcal{F}_{k}\left( a,b\right) &=&\frac{\left\{ 
\begin{array}{c}
a\mathcal{F}_{1}\left( a,b\right) -\mathcal{F}_{0}\left( a,b\right) \\ 
\text{ \ \ \ }-ab\mathcal{F}_{0}\left( a,b\right)%
\end{array}%
\right\} }{\alpha -\beta }\left\{ 
\begin{array}{c}
\frac{\alpha ^{n}-\left( ab\right) ^{\frac{n}{2}}}{\left( ab\right) ^{\frac{n%
}{2}-1}\left( \alpha ^{2}-ab\right) }\text{ \ \ \ \ \ } \\ 
-\frac{\beta ^{n}-\left( ab\right) ^{\frac{n}{2}}}{\left( ab\right) ^{\frac{n%
}{2}-1}\left( \beta ^{2}-ab\right) }%
\end{array}%
\right\} \\
&&+\frac{\mathcal{F}_{0}\left( a,b\right) }{\alpha -\beta }\left\{ 
\begin{array}{c}
\frac{\alpha ^{n+2}-\alpha ^{2}\left( ab\right) ^{\frac{n}{2}}}{\left(
ab\right) ^{\frac{n}{2}}\left( \alpha ^{2}-ab\right) }\text{ \ \ \ \ \ \ }
\\ 
-\frac{\beta ^{n+2}-\beta ^{2}\left( ab\right) ^{\frac{n}{2}}}{\left(
ab\right) ^{\frac{n}{2}}\left( \beta ^{2}-ab\right) }%
\end{array}%
\right\} \\
&&+\frac{\mathcal{F}_{1}\left( a,b\right) -b\mathcal{F}_{0}\left( a,b\right) 
}{\alpha -\beta }\left\{ 
\begin{array}{c}
\frac{\alpha ^{n+1}-\alpha \left( ab\right) ^{\frac{n}{2}}}{\left( ab\right)
^{\frac{n}{2}-1}\left( \alpha ^{2}-ab\right) }\text{ \ \ \ \ } \\ 
-\frac{\beta ^{n+1}-\beta \left( ab\right) ^{\frac{n}{2}}}{\left( ab\right)
^{\frac{n}{2}-1}\left( \beta ^{2}-ab\right) }%
\end{array}%
\right\} \\
&&+\frac{b\mathcal{F}_{0}\left( a,b\right) }{\alpha -\beta }\left\{ 
\begin{array}{c}
\frac{\alpha ^{n+2}-\alpha ^{2}\left( ab\right) ^{\frac{n}{2}}}{\left(
ab\right) ^{\frac{n}{2}}\left( \alpha ^{2}-ab\right) }\text{ \ \ \ \ \ } \\ 
-\frac{\beta ^{n+2}-\beta ^{2}\left( ab\right) ^{\frac{n}{2}}}{\left(
ab\right) ^{\frac{n}{2}}\left( \beta ^{2}-ab\right) }%
\end{array}%
\right\} .
\end{eqnarray*}%
\begin{eqnarray*}
\sum\limits_{k=0}^{n-1}\mathcal{F}_{k}\left( a,b\right) &=&\frac{a\mathcal{F}%
_{1}\left( a,b\right) -\mathcal{F}_{0}\left( a,b\right) -ab\mathcal{F}%
_{0}\left( a,b\right) }{\alpha -\beta }\left\{ 
\begin{array}{c}
-\frac{\alpha ^{n-2}-\beta ^{n-2}}{\left( ab\right) ^{\frac{n}{2}}}\text{ \
\ } \\ 
+\frac{\alpha ^{n}-\beta ^{n}}{\left( ab\right) ^{\frac{n}{2}+1}}\text{ } \\ 
-\frac{\alpha ^{2}-\beta ^{2}}{\left( ab\right) ^{2}}%
\end{array}%
\right\} \\
&&+\frac{\mathcal{F}_{0}\left( a,b\right) }{\alpha -\beta }\left\{ -\frac{%
\alpha ^{n}-\beta ^{n}}{\left( ab\right) ^{\frac{n}{2}+1}}+\frac{\alpha
^{n+2}-\beta ^{n+2}}{\left( ab\right) ^{\frac{n}{2}+2}}-\frac{\alpha
^{2}-\beta ^{2}}{\left( ab\right) ^{2}}\right\} \\
&&+\frac{\mathcal{F}_{1}\left( a,b\right) -b\mathcal{F}_{0}\left( a,b\right) 
}{\alpha -\beta }\left\{ -\frac{\alpha ^{n-1}-\beta ^{n-1}}{\left( ab\right)
^{\frac{n}{2}}}+\frac{\alpha ^{n+1}-\beta ^{n+1}}{\left( ab\right) ^{\frac{n%
}{2}+1}}\right\} \\
&&+\frac{b\mathcal{F}_{0}\left( a,b\right) }{\alpha -\beta }\left\{ -\frac{%
\alpha ^{n}-\beta ^{n}}{\left( ab\right) ^{\frac{n}{2}+1}}+\frac{\alpha
^{n+2}-\beta ^{n+2}}{\left( ab\right) ^{\frac{n}{2}+2}}-\frac{\alpha
^{2}-\beta ^{2}}{\left( ab\right) ^{2}}\right\}
\end{eqnarray*}

and thus we get%
\begin{eqnarray*}
\sum\limits_{k=0}^{n-1}\mathcal{F}_{k}\left( a,b\right) &=&\frac{\left\{ 
\begin{array}{c}
\mathcal{F}_{n+1}\left( a,b\right) +\mathcal{F}_{n}\left( a,b\right) -%
\mathcal{F}_{n-1}\left( a,b\right) -\mathcal{F}_{n-2}\left( a,b\right) \\ 
\text{ \ \ \ }-a\mathcal{F}_{1}\left( a,b\right) +ab\mathcal{F}_{0}\left(
a,b\right) -b\mathcal{F}_{0}\left( a,b\right)%
\end{array}%
\right\} }{ab} \\
&=&\frac{b\mathcal{F}_{n}\left( a,b\right) +a\mathcal{F}_{n-1}\left(
a,b\right) -a\mathcal{F}_{1}\left( a,b\right) +ab\mathcal{F}_{0}\left(
a,b\right) -b\mathcal{F}_{0}\left( a,b\right) }{ab}.
\end{eqnarray*}

Similarly, for odd $n$, we obtain%
\begin{eqnarray*}
\sum\limits_{k=0}^{n-1}\mathcal{F}_{k}\left( a,b\right)
&=&\sum\limits_{k=0}^{\frac{n-1}{2}}\mathcal{F}_{2k}\left( a,b\right)
+\sum\limits_{k=0}^{\frac{n-3}{2}}\mathcal{F}_{2k+1}\left( a,b\right) \\
&=&\frac{a\mathcal{F}_{n}\left( a,b\right) +b\mathcal{F}_{n-1}\left(
a,b\right) -a\mathcal{F}_{1}\left( a,b\right) +ab\mathcal{F}_{0}\left(
a,b\right) -b\mathcal{F}_{0}\left( a,b\right) }{ab}.
\end{eqnarray*}

As a result, we find%
\begin{equation}
\sum\limits_{k=0}^{n-1}\mathcal{F}_{k}\left( a,b\right) =\dfrac{\left\{ 
\begin{array}{c}
a^{\varepsilon \left( n\right) }b^{1-\varepsilon \left( n\right) }\mathcal{F}%
_{n}\left( a,b\right) +a^{1-\varepsilon \left( n\right) }b^{\varepsilon
\left( n\right) }\mathcal{F}_{n-1}\left( a,b\right) \\ 
\text{ \ \ }-a\mathcal{F}_{1}\left( a,b\right) +ab\mathcal{F}_{0}\left(
a,b\right) -b\mathcal{F}_{0}\left( a,b\right)%
\end{array}%
\right\} }{ab}.  \notag
\end{equation}

This completes the proof.
\end{proof}

It is clear that the following result is correct for the bi-periodic
Fibonacci matrix sequence as a consequence of the condition $(ii)$ of
Theorem $\ref{teo5}$.

\begin{corollary}
\label{cor2} For $k>0$, we have 
\begin{equation*}
\sum\limits_{k=0}^{\infty }\mathcal{F}_{k}\left( a,b\right) x^{-k}=\frac{x}{%
1-(ab+2)x^{2}+x^{4}}\left[ 
\begin{array}{cc}
x^{3}+bx^{2}-x & \frac{b}{a}x^{2}+bx-\frac{b}{a} \\ 
x^{2}+ax-1 & x^{3}-\left( ab+1\right) x+b%
\end{array}%
\right] .
\end{equation*}
\end{corollary}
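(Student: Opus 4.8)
The plan is to read off this closed form by letting $n\to\infty$ in part $(ii)$ of Theorem \ref{teo5}, which already expresses the finite partial sum $\sum_{k=0}^{n}\mathcal{F}_{k}(a,b)x^{-k}$ as a single rational expression with the very denominator $1-(ab+2)x^{2}+x^{4}$ appearing in the corollary. First I would split the braced numerator of that identity into two groups: the four terms carrying a negative power of $x$,
$$\frac{\mathcal{F}_{n-1}(a,b)}{x^{n-1}}-\frac{\mathcal{F}_{n+1}(a,b)}{x^{n-3}}+\frac{\mathcal{F}_{n}(a,b)}{x^{n}}-\frac{\mathcal{F}_{n+2}(a,b)}{x^{n+2}},$$
and the four $n$-independent terms in $x,x^{2},x^{3},x^{4}$. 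Only the second group will survive the limit.

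The key step is to justify that each of the four matrices in the first group tends to the zero matrix as $n\to\infty$, for $x$ in the region where the series converges. By the Binet formula of Theorem \ref{teo4}, every entry of $\mathcal{F}_{m}(a,b)$ is a fixed combination of $\alpha^{m},\beta^{m}$ scaled by $(ab)^{-\lfloor m/2\rfloor}$, so (using $\alpha^{2}=ab(\alpha+1)$) each entry is $O\!\left((\sqrt{\alpha+1})^{m}\right)$. Since the roots of $1-(ab+2)x^{2}+x^{4}$ are exactly $\pm\sqrt{\alpha+1},\pm\sqrt{\beta+1}$, for $|x|>\sqrt{\alpha+1}$ the quantities $\mathcal{F}_{n\pm j}(a,b)/x^{n+\ell}$ vanish entrywise as $n\to\infty$. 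Consequently the braced numerator reduces in the limit to
$$x^{4}\mathcal{F}_{0}(a,b)+x^{3}\mathcal{F}_{1}(a,b)-x^{2}\!\left[(ab+1)\mathcal{F}_{0}(a,b)-a\mathcal{F}_{1}(a,b)\right]-x\!\left(\mathcal{F}_{1}(a,b)-b\mathcal{F}_{0}(a,b)\right).$$

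It then remains only to substitute the initial matrices $\mathcal{F}_{0}(a,b)=\left(\begin{smallmatrix}1&0\\0&1\end{smallmatrix}\right)$ and $\mathcal{F}_{1}(a,b)=\left(\begin{smallmatrix}b&b/a\\1&0\end{smallmatrix}\right)$ and collect powers of $x$ entrywise. A short computation gives the $(1,1)$-entry $x^{4}+bx^{3}-x^{2}$, the $(1,2)$-entry $\tfrac{b}{a}x^{3}+bx^{2}-\tfrac{b}{a}x$, the $(2,1)$-entry $x^{3}+ax^{2}-x$, and the $(2,2)$-entry $x^{4}-(ab+1)x^{2}+bx$; factoring a common $x$ out of every entry yields precisely the matrix in the statement. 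I expect no genuine difficulty beyond this bookkeeping: the only point that requires care is the convergence argument guaranteeing that the $x^{-n}$-type terms drop out, which is exactly where the growth estimate furnished by the Binet formula of Theorem \ref{teo4} enters.
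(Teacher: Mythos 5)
Your proposal is correct and follows exactly the route the paper intends: the paper states the corollary as an immediate consequence of part $(ii)$ of Theorem \ref{teo5}, i.e., letting $n\to\infty$ so that the $\mathcal{F}_{n\pm j}(a,b)/x^{n+\ell}$ terms vanish and the remaining polynomial terms, evaluated at $\mathcal{F}_{0}(a,b)$ and $\mathcal{F}_{1}(a,b)$, yield the stated matrix. Your write-up in fact supplies more detail than the paper (the growth estimate via the Binet formula and the identification of the roots $\pm\sqrt{\alpha+1},\pm\sqrt{\beta+1}$ of $1-(ab+2)x^{2}+x^{4}$, which justifies the convergence region), but it is the same argument.
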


\section*{Conclusion}

\qquad In this paper, we define bi-periodic matrix sequence and give some
properties of this new sequence. Thus, it is obtained a new genaralization
for the matrix sequences and number sequences that have the similar
recurrence relation in the literature. By taking into account this
generalized matrix sequence and its properties, it also can be obtained
properties of bi-periodic Fibonacci numbers. That is, if we compare the $2$%
\textit{nd} row and $1$\textit{st} column entries of obtained equalities for
matrix sequence in Section 2, we can get some properties for bi-periodic
Fibonacci numbers. Also, some well-known matrix sequences, such as
Fibonacci, Pell and $k$-Fibonacci are special cases of \{$\mathcal{F}%
_{n}\left( a,b\right) $\} matrix sequence. That is, if we choose the
different values of $a$ and $b$, then we obtain the summations, generating
functions, Binet formulas of the well-known matrix sequence in the
literature:

\begin{itemize}
\item If we replace $a=b=1$ in $\mathcal{F}_{n}\left( a,b\right) $, we
obtain the generating function, Binet formula and summations for Fibonacci
matrix sequence and Fibonacci numbers.

\item If we replace $a=b=2$ in $\mathcal{F}_{n}\left( a,b\right) $, we
obtain the generating function, Binet formula and summations for Pell matrix
sequence and Pell numbers.

\item If we replace $a=b=k$ in $\mathcal{F}_{n}\left( a,b\right) $, we
obtain the generating function, Binet formula and summations for $k$%
-Fibonacci matrix sequence and $k$-Fibonacci numbers.
\end{itemize}

\section*{Acknowledgement}

\qquad This study is a part of Arzu Co\c{s}kun's Ph.D. Thesis. Thank to the
editor and reviewers for their interests and valuable comments.

\end{document}